\documentclass[reqno]{amsart}
\usepackage{amsmath, amssymb, amsthm, epsfig}
\usepackage{hyperref, latexsym}
\usepackage{url} 
\def\today{\ifcase\month\or
  January\or February\or March\or April\or May\or June\or
  July\or August\or September\or October\or November\or December\fi
  \space\number\day, \number\year}

 \newtheorem{theorem}{Theorem}
 \newtheorem{lemma}[theorem]{Lemma}

 \theoremstyle{definition}

 \theoremstyle{remark}
 
 \newcommand{\mc}{\mathcal}

 \newcommand{\R}{\mathbb{R}}

 \newcommand{\wg}{\widetilde{g}}
\newcommand{\wf}{\widetilde{f}}
 
 \newcommand{\wh}{\widetilde{h}}

\newcommand{\dz}{\text{\rm d}z}

 \newcommand{\dv}{\text{\rm d}v}

 \newcommand{\dx}{\text{\rm d}x}
 \newcommand{\dk}{\text{\rm d}k}

 \newcommand{\dom}{\text{\rm d}\omega}
 \newcommand{\dvs}{\text{\rm d} v_{*}}
\newcommand{\dsig}{\text{\rm d}\sigma_n^{\alpha}}

\newcommand{\dxi}{\text{\rm d}\xi_n^{b}}

\newcommand{\dnua}{\text{\rm d}\nu_{\alpha}}

 \newcommand{\dmu}{\text{\rm d}\mu(R)}
 \newcommand{\dnu}{\text{\rm d}\nu(x)}
 
 \newcommand{\dpi}{\text{\rm d}\pi}

 \newcommand{\fs}{f^{\star}}
\newcommand{\gs}{g^{\star}}
\newcommand{\hs}{h^{\star}}

\begin{document}

\title[Boltzmann collision operator]{Estimates for the Boltzmann collision operator via radial symmetry and Fourier transform}
\author[Alonso and Carneiro]{Ricardo J. Alonso* and Emanuel Carneiro**}
\thanks{*Research supported by NSF grant DMS-0636586}
\thanks{**Research supported by the CAPES/FULBRIGHT grant BEX 1710-04-4 and the Homer Lindsey Bruce Fellowship from the University of Texas.}
\date{\today}
\subjclass[2000]{76P05 , 47G10}
\keywords{Boltzmann equation; Fourier transform; radial symmetry; Young's inequality.}
\address{Department of Mathematics, University of Texas at Austin, Austin, TX 78712-1082.}
\email{ralonso@math.utexas.edu}
\address{Department of Mathematics, University of Texas at Austin, Austin, TX 78712-1082.}
\email{ecarneiro@math.utexas.edu}
\allowdisplaybreaks
\numberwithin{equation}{section}

\begin{abstract}
We extend the $L^p$-theory of the Boltzmann collision operator by using classical techniques based in the Carleman representation and Fourier analysis, allied to new ideas that exploit the radial symmetry of this operator. We are then able to greatly simplify existent technical proofs in this theory, extend the range, and obtain explicit sharp constants in some convolution-like inequalities for the gain part of the Boltzmann collision operator. 

\end{abstract} 

\maketitle
\section{Introduction}
\subsection{The Boltzmann equation}
Let us assume that we have a large space filled with particles that are considered as mass points.  Assume that these particles are interacting with a specific law and that the particles are not influenced by external forces.  A good model to represent such dynamical system is given by the equation
\begin{equation}\label{Intro1}
\frac{\partial{f}}{\partial{t}}+v\cdot\nabla_{x}{f}=Q(f,f)\;\;\mbox{in}\;\;(0,\infty)\times\mathbb{R}^{n}\times\mathbb{R}^{n}.
\end{equation}
The function $f(t,x,v)$, where $(t,x,v)\in(0,\infty)\times\mathbb{R}^{n}\times\mathbb{R}^{n}$, represents the phase space density of particles which at time $t$ and point $x$ move with velocity $v$.  The physical meaning implies that 
\begin{equation*}
f(t,x,v)\geq 0.
\end{equation*}
Equation (\ref{Intro1}) was derived by the first time by L. Boltzmann in 1872 in his studies of dilute gases. The term $Q(f,f)$ is known as the Boltzmann collision operator and its purpose it to model the interaction of the particles.  It is customary to split this operator in two, a positive and a negative part, which quantify the appearance and disappearance of particles in \textit{space-velocity} at a given time $t$.  Thus, for any suitable, measurable $f$ and $g$ we write
\begin{equation*}
Q(f,g):=Q^{+}(f,g)-Q^{-}(f,g)\,,
\end{equation*}
where
\begin{equation}\label{Intro2}
Q^{+}(f,g)(v):=\int_{\mathbb{R}^{n}}\int_{S^{n-1}}f(v')g(v'_{*})B(|u|,\hat{u}\cdot \omega)\,\dom\, \dvs\,,
\end{equation}
and 
\begin{equation}\label{Intro3}
Q^{-}(f,g)(v):=\int_{\mathbb{R}^{n}}\int_{S^{n-1}}f(v)g(v_{*})B(|u|,\hat{u}\cdot \omega)\, \dom \, \dvs.
\end{equation}
The pair of symbols $\{v',v'_{*}\}$ represents the final velocities of two particles after interacting with initial velocities $\{v,v_{*}\}$.  The relation between these is given by the formulas
\begin{equation*}\label{Intro4}
v'=V+\frac{|u|}{2}\omega \ \ \mbox{and} \ \ v'_{*}=V-\frac{|u|}{2}\omega\,,
\end{equation*}
where $V$ is the velocity of the center of mass of the particles, and $u$ is the relative velocity between them, i.e.
\begin{equation*}\label{Intro5}
V:=\frac{v+v_{*}}{2} \ \ \mbox{and} \ \ u:=v-v_{*}.
\end{equation*}
The symbol $\hat{u}$ represents the unitary vector in the direction of $u$ ($\hat{u}=u/|u|$) and $\dom$ is the surface measure on the sphere $S^{n-1}$.  The function $B(|u|,\hat{u}\cdot \omega)$ is known as the collision kernel and it is common to assume that this function can be factored in two: a magnitude function and an angular function, 
\begin{equation}\label{Intro6}
B(|u|,\hat{u}\cdot \omega)=\Phi(|u|)b(\hat{u}\cdot \omega).
\end{equation}
The most commom models found in the literature assume that $\Phi(|u|) = |u|^{\lambda}$, for example the Maxwellian molecules model ($\lambda=0$) and the hard spheres model ($\lambda=1$). Also, for the angular part, it is customary to assume that $b \geq 0$ and 
\begin{equation}\label{Intro7}
\int_{S^{n-1}}b(\hat{u}\cdot \omega)\,\dom < \infty.
\end{equation}
This condition, known as Grad's cut-off assumption, will be used throughout this paper.

\subsection{The Fourier transform approach}
The classical theory on Boltzmann equation establishes conservation of mass and energy for the solution. Therefore, the operator
\begin{equation*}\label{Intro8}
\frac{\partial}{\partial{t}}+v\cdot\nabla_{x}
\end{equation*}
admits a well-defined Fourier transform in velocity, for almost every $(t,x) \in (0,\infty) \times \R^n$, if applied to a solution of (\ref{Intro1}), namely
\begin{equation}\label{Intro9}
\frac{\partial{\widehat{f}}}{\partial{t}}+i\,\nabla_{x}\cdot\nabla_{k}\widehat{f}=\widehat{Q}(f,f)\;\;\mbox{in}\;\;(0,\infty)\times\mathbb{R}^{n}\times\mathbb{R}^{n}, 
\end{equation}
where $k$ is the variable in the Fourier space. This brings us to the problem of finding a reasonable representation for $\widehat{Q}(f,f)$; preferably in terms of $\widehat{f}$, since the left-hand side of the equation (\ref{Intro9}) depends only on $\widehat{f}$ (see \cite{D} for a complete discussion).

In the case of Maxwellian molecules such a representation was first figured by Bobylev in \cite{Bo1} and \cite{Bo2}. Denoting by $Q_0$ the collision operator in this case, he obtained
\begin{align}\label{Intro10}
\begin{split}
\widehat{Q_0}&(f,f)(k) =  \widehat{Q_0^+}(f,f)(k) -  \widehat{Q_0^-}(f,f)(k)\\
                    & = \int_{S^{n-1}}\widehat{f}(k^{+})\widehat{f}(k^{-})b(\hat{k}\cdot\omega)\, \dom- \widehat{f}(0) \widehat{f}(k)\left(\int_{S^{n-1}}b(\hat{u}\cdot \omega)\,\dom\right),
\end{split}
\end{align}
where $k^{+}$ and $k^{-}$ are given by
\begin{equation}\label{Intro11}
k^{+}=\frac{k+\left|k\right|\omega}{2} \ \ \ \textrm{and} \ \ \ k^{-}=\frac{k-\left|k\right|\omega}{2}.
\end{equation}

Our ultimate goal in this paper is to study the integrability properties of the positive part of the general Boltzmann collision operator defined in (\ref{Intro2}). In order to do this, we first study the Fourier transform of the gain term of the Maxwellian molecules operator
\begin{equation}\label{Intro11.1}
\widehat{Q_0^+}(f,f)(k) = \int_{S^{n-1}}\widehat{f}(k^{+})\widehat{f}(k^{-})b(\hat{k}\cdot\omega)\, \dom
\end{equation}
from a harmonic analysis point of view. Motivated by representation (\ref{Intro11.1}) we define the following operator, for continuous functions $g$ and $h$,
\begin{equation}\label{Intro12}
\mc{P}(g,h)(k)=\int_{S^{n-1}}g(k^{+})h(k^{-})b(\hat{k}\cdot\omega)\,\dom.
\end{equation}
The analysis of the bilinear operator $\mc{P}$ is the object of study in Section 2. The core result of this paper is Lemma \ref{Thm2.2}, a radial symmetrization inequality, that allows us to reduce the study of the operator $\mc{P}$ to radial variables. By doing so, we are naturally led to consider the following measure spaces. Let $b:[-1,1]\to \R^{+}$ be the angular part of the collision kernel, we will define the measure $\xi_n^{b}$ on $[0,1]$ by
\begin{equation}\label{Intro13.3}
\dxi(z) = b(2z-1)[z(1-z)]^{\tfrac{n-3}{2}}\,\dz.
\end{equation}
Most of the constants in our estimates will be given in terms of the following integral reminiscent of the classical beta function
\begin{equation}\label{Intro13.4}
\beta_b(x,y):= \int_0^1 z^x\,(1-z)^y\, \dxi(z),
\end{equation}
and, in this context, Grad's cut-off assumption (\ref{Intro7}) can be rewritten as
\begin{equation}\label{Intro13.1}
\int_{S^{n-1}}b(\hat{k}\cdot \omega)\,\dom = 2^{n-2}\left|S^{n-2}\right| \beta_b(0,0) < \infty\,.
\end{equation}
For $\alpha \in \R$, we will use the measure $\dnua(k) = |k|^{\alpha} \, \dk $ on $\R^n$, and further require
\begin{equation}\label{Intro13.2}
\beta_{b}\left(-\tfrac{n+\alpha}{2p},-\tfrac{n+\alpha}{2q}\right) < \infty \,
\end{equation}
to state our first result.
\begin{theorem}\label{thm1}
Let $1 \leq p,q,r \leq \infty$ with $1/p + 1/q = 1/r$, and $\alpha \in\R$. If the angular function $b:[-1,1]\to \R^{+}$ satisfies {\rm (\ref{Intro13.2})} the bilinear operator $\mc{P}$ extends to a bounded operator from $L^p(\R^n, \dnua) \times L^q(\R^n, \dnua)$ to $L^r(\R^n, \dnua)$ via the estimate
\begin{equation*}
\left\|\mathcal{P}(g,h)\right\|_{L^r(\R^n, \dnua)}\leq \, C \, \left\|g\right\|_{L^p(\R^n, \dnua)}\left\|h\right\|_{L^q(\R^n, \dnua)}.
\end{equation*}
The constant 
\begin{equation*}
 C = C(n,\alpha, p,q,b) = 2^{n-2}\left|S^{n-2}\right|\beta_{b}\left(-\tfrac{n+\alpha}{2p},-\tfrac{n+\alpha}{2q}\right)
\end{equation*}
is sharp.
\end{theorem}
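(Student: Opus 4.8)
\emph{Strategy.} I would prove the boundedness and the sharpness of $C$ separately. For boundedness, since $b\ge 0$ we have $|\mc{P}(g,h)|\le\mc{P}(|g|,|h|)$, so we may assume $g,h\ge 0$; and by density (nonnegative continuous compactly supported functions are dense, and the estimate is dilation-invariant) it is enough to prove the inequality for such $g,h$. The decisive reduction is to the case where $g$ and $h$ are \emph{radial}: this is exactly what the radial symmetrization Lemma~\ref{Thm2.2} provides, producing radial $\gs,\hs$ with $\|\gs\|_{L^{p}(\dnua)}=\|g\|_{L^{p}(\dnua)}$, $\|\hs\|_{L^{q}(\dnua)}=\|h\|_{L^{q}(\dnua)}$ and $\|\mc{P}(g,h)\|_{L^{r}(\dnua)}\le\|\mc{P}(\gs,\hs)\|_{L^{r}(\dnua)}$. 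From here on write $g(k)=g_0(|k|)$ and $h(k)=h_0(|k|)$.

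\emph{The radial identity and the upper bound.} Since $k^{+}\cdot k^{-}=0$ and $|k^{+}|^{2}+|k^{-}|^{2}=|k|^{2}$, setting $z=\tfrac{1+\hat k\cdot\omega}{2}$ gives $|k^{+}|=|k|\sqrt{z}$ and $|k^{-}|=|k|\sqrt{1-z}$. Slicing $S^{n-1}$ along the $\hat k$-axis and changing variables via $\hat k\cdot\omega=2z-1$ (the substitution built into \eqref{Intro13.3}) yields the identity
\begin{equation*}
\mc{P}(g,h)(k)=2^{n-2}\left|S^{n-2}\right|\int_{0}^{1}g_0\!\left(|k|\sqrt{z}\right)h_0\!\left(|k|\sqrt{1-z}\right)\dxi(z).
\end{equation*}
Now apply Minkowski's integral inequality in the variable $z$ to bring the $L^{r}(\dnua)$-norm inside the integral, then Hölder's inequality with exponents $p,q$ (using $1/p+1/q=1/r$) to the product, and finally the elementary dilation identity $\|g_0(|\cdot|\sqrt{z}\,)\|_{L^{p}(\dnua)}=z^{-\frac{n+\alpha}{2p}}\|g\|_{L^{p}(\dnua)}$, obtained by the substitution $\sigma=\rho\sqrt{z}$ in polar coordinates, together with its analogue for $h_0$. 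This gives
\begin{equation*}
\|\mc{P}(g,h)\|_{L^{r}(\dnua)}\le 2^{n-2}\left|S^{n-2}\right|\left(\int_{0}^{1}z^{-\frac{n+\alpha}{2p}}(1-z)^{-\frac{n+\alpha}{2q}}\dxi(z)\right)\|g\|_{L^{p}(\dnua)}\|h\|_{L^{q}(\dnua)},
\end{equation*}
which by \eqref{Intro13.4} is precisely $C\,\|g\|_{L^{p}(\dnua)}\|h\|_{L^{q}(\dnua)}$, with $C<\infty$ by \eqref{Intro13.2}. If any of $p,q,r$ equals $\infty$, the same three steps apply with the usual conventions.

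\emph{Sharpness.} The only inequalities used above are those of Minkowski and Hölder, and both become equalities for functions that are constant on long intervals in the logarithmic radial variable $t=\log|k|$ — in which, after multiplication by $e^{t(n+\alpha)/r}$, $L^{r}(\dnua)$ becomes $L^{r}(\R,\dt)$ and $\mc{P}$ becomes a superposition of translates. Accordingly I would test the estimate on
\begin{equation*}
g_N(k)=|k|^{-\frac{n+\alpha}{p}}\,\mathbf{1}_{\{e^{-N}\le|k|\le e^{N}\}},\qquad h_N(k)=|k|^{-\frac{n+\alpha}{q}}\,\mathbf{1}_{\{e^{-N}\le|k|\le e^{N}\}},
\end{equation*}
for which $\|g_N\|_{L^{p}(\dnua)}^{p}=\|h_N\|_{L^{q}(\dnua)}^{q}=2N\,|S^{n-1}|$, while the radial identity gives, on the annulus $\{e^{-N+\sqrt{N}}\le|k|\le e^{N}\}$, the pointwise bound $\mc{P}(g_N,h_N)(k)\ge C_N\,|k|^{-\frac{n+\alpha}{r}}$ with
\begin{equation*}
C_N=2^{n-2}\left|S^{n-2}\right|\int_{e^{-2\sqrt{N}}}^{\,1-e^{-2\sqrt{N}}}z^{-\frac{n+\alpha}{2p}}(1-z)^{-\frac{n+\alpha}{2q}}\dxi(z)\ \uparrow\ C
\end{equation*}
as $N\to\infty$, by monotone convergence (the limit finite by \eqref{Intro13.2}). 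Since $|k|^{-\frac{n+\alpha}{r}}$ has $L^{r}(\dnua)$-norm $\big((2N-\sqrt{N})|S^{n-1}|\big)^{1/r}$ over that annulus, the quotient $\|\mc{P}(g_N,h_N)\|_{L^{r}(\dnua)}/\big(\|g_N\|_{L^{p}(\dnua)}\|h_N\|_{L^{q}(\dnua)}\big)$ is at least $C_N\big(1-\tfrac{1}{2\sqrt{N}}\big)^{1/r}\to C$, proving that $C$ cannot be decreased.

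\emph{Where the difficulty lies.} Granting the radial symmetrization, the steps above are essentially bookkeeping: one has to justify Minkowski's inequality in the measure $\dxi$ when $r<\min(p,q)$, check the endpoint exponents, and, in the sharpness argument, verify that truncating to a finite annulus discards only an $o(N)$ fraction of the relevant $L^{r}$-mass — which is what allows the \emph{sharp} constant to be recovered in the limit. The genuinely hard ingredient, and where the real work goes, is the radial symmetrization itself (Lemma~\ref{Thm2.2}); I would attack it by writing the $S^{n-1}$-integral as an average over $z\in[0,1]$ of integrals over the $(n-2)$-spheres traced out by $k^{+}$ and $k^{-}$, and then symmetrizing on those spheres.
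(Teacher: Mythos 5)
Your proposal is correct and follows essentially the same route as the paper: radial symmetrization via Lemma~\ref{Thm2.2} (which, as in the paper's display (\ref{S2.45}), one converts from the trilinear form to the $L^r(\dnua)$-norm bound by duality, testing against $f_1(k)=f(k)|k|^{\alpha}$), then reduction to the one-dimensional beta-type convolution, Minkowski plus H\"older plus the dilation identity for the upper bound, and power-law extremizers for sharpness. The only cosmetic differences are that the paper isolates the one-dimensional step as Lemma~\ref{Thm2.3} in the variable $x=|k|^{2}$ and uses $\epsilon$-regularized powers $\epsilon^{1/p}x^{-(n+\alpha-2\epsilon)/2p}$ on $(0,1)$ where you use sharply truncated powers on $[e^{-N},e^{N}]$; both extremizing families work.
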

Observe that if $\alpha > -n$ condition (\ref{Intro13.2}) implies (\ref{Intro13.1}), and vice versa if $\alpha < -n$. An interesting feature of Theorem 1 is that the sharp constant is found in terms of an integral condition for the kernel $b$ rather than classical pointwise assumptions (for example, that $b$ is bounded or vanishes near the endpoints). Similar integral conditions for other related inequalities (Povzner's lemmas) have been obtained in \cite{BGP}, \cite{GPV} and \cite{GPV2}.

\subsection{Young's inequality}

The $L^{p}$-theory of the Boltzmann collision operator started with the works \cite{C1} and \cite{C2} of Carleman in 1932 and 1957. Later, Arkeryd in \cite{A} extended the theory and worked $L^{\infty}$-estimates, but it was not until Gustafsson \cite{G} in 1988 that the convolution behavior of the Boltzmann collision operator was noticed. In his work, Gustafsson proves, by means of the Carleman representation \cite{C1} and the Riesz-Thorin interpolation theorem, estimates of the form\footnote{Inequalities (\ref{Intro14})-(\ref{Intro15}) are presented in an informal way. The precise statements involve weighted Lebesgue spaces and smooth conditions on the kernel $B$.}
\begin{equation}\label{Intro14}
\left\|\tilde{Q}^{+}(g,h)\right\|_{p}\leq C_{p}\left\|g\right\|_{1}\left\|h\right\|_{p},
\end{equation}
with $p\geq 1$ and for a truncated version $\tilde{Q}^{+}$ of the collision operator.  In the sequel, he uses O'Neil's interpolation result for convolutions \cite{ON} to conclude Young's inequality for this truncated operator:
\begin{equation}\label{Intro15}
\left\|\tilde{Q}^{+}(g,h)\right\|_{r}\leq C_{p,q}\left\|g\right\|_{p}\left\|h\right\|_{q},
\end{equation}
for all $p,q,r\geq 1$ such that $1/p+1/q=1+1/r$.  Since an intricate non-linear interpolation procedure is used in O'Neil's theorem, the constant $C_{p,q}$ is not explicit. More recently, Mouhot and Villani \cite{MV} studied extensions of these previous results to different weighted $L^{p}$ and Sobolev spaces.

We devote Section 3 of this paper to revisit and extend the $L^p$-theory of the Boltzmann collision operator, proving a more general version of Young's inequality previously obtained by Gustafsson (\cite[Lemma 2.2]{G}), Mouhot-Villani (\cite[Theorem 2.1]{MV}) and Gamba-Panferov-Villani (\cite[Lemma 4.1]{GPV2}). The novelty here relies mainly in two aspects: (a) we obtain Young's inequality for the full range $p,q,r$\,;\, (b) our explicit constant is once more given in terms of an integral condition in $b$, and therefore we do not have to assume that the kernel $b:[-1,1] \to \R^{+}$ is bounded or vanishes near the endpoints. Moreover, our proof is elementary and relies on the machinery developed in Section 2. We briefly describe this result below.

Let the weighted Lebesgue spaces $L^p_{\lambda}(\R^n)$ ($p\geq 1$, $\lambda \in \R$) be defined by the norm
\begin{equation}\label{Intro17}
\|f\|_{L^p_\lambda(\R^n)} = \left( \int_{\R^n} |f(k)|^p \,\left(1+ |k|^{p\lambda}\right) \dk\right)^{1/p}.
\end{equation}
Let $r' \geq 1$ be given. Recalling the integral operator $\beta_b$ defined in (\ref{Intro13.4}) and (\ref{Intro13.3}), we will make the following assumption on the angular kernel $b:[-1, 1] \to \R^{+}$
\begin{equation}\label{Intro18}
\beta_b\left(-\tfrac{n}{2r'}, -\tfrac{n}{2r'}\right) < \infty\,.
\end{equation}
\begin{theorem}\label{thm2}
Let $1 \leq p,\,q,\,r \leq \infty$ with $1/p + 1/q = 1 + 1/r$. Assume that $\Phi(|u|) = |u|^{\lambda}$ with $\lambda \geq 0$ and that the angular function $b:[-1,1] \to \R^{+}$ satisfies {\rm (\ref{Intro18})}. The bilinear operator $Q^{+}$ extends to a bounded operator from $L^p_{\lambda}(\R^n) \times L^q_{\lambda}(\R^n) \to L^r(\R^n)$ via the estimate
\begin{equation}\label{Intro19}
\left\| Q^{+}(g,h)\right\|_{L^r(\R^n)} \leq C \,\|g\|_{L^p_{\lambda}(\R^n)} \, \|h\|_{L^q_{\lambda}(\R^n)}.
\end{equation}
The constant $C$ may be taken as
\begin{equation*}
C = 2^{ \lambda+n-1}\, |S^{n-2}|\, \beta_b\left(-\tfrac{n}{2r'}, -\tfrac{n}{2r'}\right).
\end{equation*}
\end{theorem}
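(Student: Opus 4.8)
The plan is to strip off the collision magnitude $|u|^\lambda$ by a pointwise bound, reducing to the case of Maxwellian molecules, and then to import the estimates for the operator $\mathcal P$ from Section~2 via Bobylev's identity. For the reduction: since a collision preserves the relative velocity, $|u|=|v'-v_*'|\le|v'|+|v_*'|$, so $|u|^\lambda\le 2^\lambda(|v'|^\lambda+|v_*'|^\lambda)$ for $\lambda\ge0$; inserting this in \eqref{Intro2} and using $|Q^+(g,h)|\le Q^+(|g|,|h|)$ gives the pointwise estimate $|Q^+(g,h)(v)|\le 2^\lambda\big(Q_0^+(|\cdot|^\lambda|g|,|h|)(v)+Q_0^+(|g|,|\cdot|^\lambda|h|)(v)\big)$, $Q_0^+$ denoting the gain operator in the Maxwellian case ($\Phi\equiv1$). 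Since $\||\cdot|^\lambda g\|_{L^p}\le\|g\|_{L^p_\lambda}$ and $\|g\|_{L^p}\le\|g\|_{L^p_\lambda}$, it remains to prove the unweighted estimate $\|Q_0^+(g,h)\|_{L^r}\le 2^{n-2}|S^{n-2}|\beta_b(-\tfrac{n}{2r'},-\tfrac{n}{2r'})\|g\|_{L^p}\|h\|_{L^q}$ for $1/p+1/q=1+1/r$.

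The main case of this Maxwellian estimate I would handle on the Fourier side. By Bobylev's identity \eqref{Intro11.1}, $\widehat{Q_0^+(g,h)}=\mathcal P(\ug,\uh)$; when $r\ge2$ and $p,q\le2$, Hausdorff--Young gives $\|Q_0^+(g,h)\|_{L^r}\le\|\mathcal P(\ug,\uh)\|_{L^{r'}(\R^n)}$, and Theorem~\ref{thm1} with $\alpha=0$ and exponents $(p',q',r')$ (which satisfy $1/p'+1/q'=1/r'$) bounds this by $2^{n-2}|S^{n-2}|\beta_b(-\tfrac{n}{2p'},-\tfrac{n}{2q'})\|\ug\|_{L^{p'}}\|\uh\|_{L^{q'}}$; a second application of Hausdorff--Young turns $\|\ug\|_{L^{p'}},\|\uh\|_{L^{q'}}$ into $\|g\|_{L^p},\|h\|_{L^q}$. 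Since $1/r=1/p+1/q-1$ with $p,q\ge1$ forces $p,q\le r$, the exponents $-\tfrac{n}{2p'},-\tfrac{n}{2q'}$ are $\ge-\tfrac{n}{2r'}$, and as $z\mapsto z^x$ is nonincreasing on $[0,1]$ we get $\beta_b(-\tfrac{n}{2p'},-\tfrac{n}{2q'})\le\beta_b(-\tfrac{n}{2r'},-\tfrac{n}{2r'})$, which closes this range.

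The remaining configurations should be handled by more hands-on means: the corner $p=q=r=1$ follows from the measure-preserving pre-/post-collisional change of variables, producing exactly the constant $2^{n-2}|S^{n-2}|\beta_b(0,0)$ of \eqref{Intro13.1}; the range $r=\infty$ and, more generally, the configurations with $p$ or $q$ larger than $2$ (where $\ug$ or $\uh$ is only a tempered distribution and the Fourier route fails) are reached either from the Carleman representation of $Q_0^+$, applying H\"older to the ray integral in $g$ and the orthogonal-hyperplane integral in $h$ so that the constant again emerges as a $\beta_b$ integral after passing to polar coordinates, or by duality, writing $\|Q_0^+(g,h)\|_{L^r}=\sup_{\|\psi\|_{L^{r'}}\le1}|\langle Q_0^+(g,h),\psi\rangle|$, using the weak form of $Q_0^+$ and Parseval to keep the large-exponent factor in physical space, and closing with H\"older and Theorem~\ref{thm1} on the remaining factors; intermediate $r$ are then filled in by Riesz--Thorin interpolation in one slot at a time. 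I expect the main obstacle to be precisely this: covering the full Young range $1\le p,q,r\le\infty$ with a single clean constant, since Hausdorff--Young is available only for $p\le2$, so the complementary pieces of the simplex must be glued together while checking that no route yields a constant exceeding $2^{n-2}|S^{n-2}|\beta_b(-\tfrac{n}{2r'},-\tfrac{n}{2r'})$ --- bookkeeping that rests on the monotonicity and log-convexity of $(x,y)\mapsto\beta_b(x,y)$ and on the fact, which is the content of Lemma~\ref{Thm2.2}, that the radial symmetrization produces exactly these $\beta_b$ constants. A routine but necessary point will be justifying the manipulations (Fubini, Parseval, the change of variables in the weak form, density of smooth functions) for general $L^p$ inputs.
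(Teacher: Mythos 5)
Your reduction of the weight (via $|u|=|v'-v'_*|\le|v'|+|v'_*|$) is correct and is essentially the paper's step, and your Fourier-side argument --- Bobylev's identity, Hausdorff--Young, Theorem \ref{thm1} with exponents $(p',q',r')$ and $\alpha=0$, plus the monotonicity of $\beta_b$ --- does prove the Maxwellian estimate with the claimed constant, but only on the sub-simplex $p,q\le 2\le r$. The theorem asserts the full Young range under the single hypothesis (\ref{Intro18}), and the patches you propose for the rest do not close the gap. The interpolation step is the decisive problem: Riesz--Thorin yields geometric means of endpoint constants, and these are in general \emph{not} dominated by $\beta_b\left(-\tfrac{n}{2r'},-\tfrac{n}{2r'}\right)$ at the interpolated triple (log-convexity of $\beta_b$ gives an inequality in the wrong direction for this purpose); worse, if you interpolate in $r$ the endpoints require finiteness of $\beta_b$ at more negative arguments than (\ref{Intro18}) supplies for the given $r$ --- e.g.\ for fixed $r<\infty$ the hypothesis $\beta_b\left(-\tfrac{n}{2r'},-\tfrac{n}{2r'}\right)<\infty$ does not imply $\beta_b\left(-\tfrac n2,-\tfrac n2\right)<\infty$, so the $r=\infty$ endpoint may fail outright for kernels the theorem covers. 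If instead you fix $r$ and interpolate along the segment from $(p,q)=(1,r)$ to $(r,1)$, you first need those two endpoint estimates with the stated constant, i.e.\ precisely the Gustafsson-type bounds (\ref{Intro14}); these are not covered by your Hausdorff--Young range when $r\ne 2$ and are only gestured at ("Carleman plus H\"older", "duality plus Parseval"), so the argument is not actually carried out where it is hardest.

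The missing device is the one the paper uses to treat the whole simplex at once. Lemma \ref{lem3.1} (Carleman) gives $Q^{+}(g,h)(v)=\int_{\R^n}\Phi(|k|)\,\mathcal P(\tau_{-v}g,\tau_{-v}h)(k)\,\dk$, and changes of variables in the trilinear form $\int f\,Q^{+}(g,h)$ move the duality function $f$ \emph{inside} $\mathcal P$, paired with the constant function $1$:
\begin{equation*}
I=\int_{\R^n}\int_{\R^n} g(v)\,h(v-k)\,\mathcal P(\mathcal R\tau_{-v}f,1)(k)\,|k|^{\lambda}\,\dk\,\dv.
\end{equation*}
After splitting $|k|^{\lambda}\le 2^{\lambda}(|v|^{\lambda}+|v-k|^{\lambda})$, a single trilinear H\"older with exponents $1/p'+1/q'+1/r=1$ combined with Theorem \ref{thm1} applied to the pair $(r',\infty)$ --- producing the constants $\beta_b\left(-\tfrac n{2r'},0\right)$ and $\beta_b\left(0,-\tfrac n{2r'}\right)$, both $\le\beta_b\left(-\tfrac n{2r'},-\tfrac n{2r'}\right)$ --- covers all $1\le p,q,r\le\infty$ with $1/p+1/q=1+1/r$ simultaneously, with no Fourier transform, no interpolation, and under exactly (\ref{Intro18}). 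Without this (or an equivalent uniform argument) your outline establishes the theorem only on a proper sub-range.
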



\section{Harmonic analysis approach to the Maxwellian molecules operator}

\subsection{Radial symmetrization techniques}
In this section we will work with an operator derived from the the Maxwellian molecules operator, in which $B(|u|,\hat{u}\cdot \omega)= b(\hat{u}\cdot \omega)$. Assume that the angular function $b:[-1,1] \to \R^{+}$ satisfies the Grad's cut-off assumption (\ref{Intro7}). For continuous functions $g$ and $h$ we define the bilinear operator, for $k \neq 0$,
\begin{equation}\label{S2.3}
\mc{P}(g,h)(k)=\int_{S^{n-1}}g(k^{+})h(k^{-})b(\hat{k}\cdot\omega)\,\dom,
\end{equation}
where $k^{+}$ and $k^{-}$ are given by
\begin{equation}\label{S2.4}
k^{+}=\frac{k+\left|k\right|\omega}{2} \ \ \ \textrm{and} \ \ \ k^{-}=\frac{k-\left|k\right|\omega}{2}.
\end{equation}
Recall here that we are denoting $\hat{k}$ as the unitary vector in the direction of $k$ (i.e. $\hat{k} = k/|k|$). From (\ref{S2.4}) we can easily infer that 
\begin{equation}\label{S2.5}
k=k^{+}+k^{-} \ \ \ \textrm{and} \ \ \ \left|k\right|^{2}=\left|k^{+}\right|^{2}+\left|k^{-}\right|^{2}. 
\end{equation}

The purpose of this section is to study the operator $\mc{P}$ defined in (\ref{S2.3}), which can be seen as a special kind of convolution in the sphere. Motivated by the Riesz rearrangement inequality for the classical convolution, one might expect that the radial symmetry should also play a role here, namely, that we should be able to relate $\mc{P}(g,h)$ with $\mc{P}(g^{\star},h^{\star})$ where $g^{\star}$ and $h^{\star}$ are suitable radial symmetrizations of $g$ and $h$. This is indeed the case, and in order to clarify this behavior, we start with the following Carleman type lemma.

\begin{lemma}\label{lem2.1}
Let $f$, $g$ and $h$ be in $C_0(\R^n)$ and $b$ in $C([-1,1])$. Then
\begin{align}\label{S2.6}
\begin{split}
&\int_{\mathbb{R}^{n}} f(k)\mathcal{P}(g,h)(k)\,\dk = \\&   2^{n-1}\int_{\mathbb{R}^{n}}\frac{g(x)}{\left|x\right|}\int_{\left\{x\cdot z=0\right\}}\frac{f(x+z)}{\left|x+z\right|^{n-2}}\;h(z)\;b\left(\tfrac{2|x|^{2}}{|x+z|^{2}}-1\right)\dpi_z\,\dx\, ,
\end{split}
\end{align}
where $\dpi_z$ denotes the $(n-1)$-dimensional Lebesgue measure in the hyperplane $\{x\cdot z=0\}$.
\end{lemma}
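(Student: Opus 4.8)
The plan is to start from the left-hand side of \eqref{S2.6}, substitute the definition \eqref{S2.3} of $\mathcal{P}(g,h)$, and interchange the order of integration to obtain
\[
\int_{\R^n} f(k)\,\mathcal{P}(g,h)(k)\,\dk = \int_{\R^n}\int_{S^{n-1}} f(k)\,g(k^+)\,h(k^-)\,b(\hat{k}\cdot\omega)\,\dom\,\dk .
\]
The heart of the matter is to replace the pair of variables $(k,\omega)\in\R^n\times S^{n-1}$ by the single pair $(x,z)=(k^+,k^-)$. From \eqref{S2.4}--\eqref{S2.5} we have $k=x+z$, and since $k^+\cdot k^-=\frac14(|k|^2-|k|^2)=0$ the vector $z$ lives in the hyperplane $\{x\cdot z=0\}$; moreover $k^+$ and $|k^-|$ determine $\omega$ (given $k^+\neq0$), so the map $(k,\omega)\mapsto(x,z)$ should be essentially a bijection onto $\{(x,z): x\cdot z=0\}$. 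The first real step is therefore to compute the Jacobian of this change of variables carefully. I expect that the measure $\dk\,\dom$ on $\R^n\times S^{n-1}$ transforms into a measure of the form $c\,|x|^{-1}|x+z|^{-(n-2)}\,\dpi_z\,\dx$ on $\{x\cdot z=0\}\times\R^n$; the factor $|x+z|^{-(n-2)}=|k|^{-(n-2)}$ should arise because $\dom$ is the surface measure on a sphere of radius $1$ while $k^-=z$ ranges over a sphere of radius $|k|/2$ inside the hyperplane, and the factor $|x|^{-1}=|k^+|^{-1}$ together with the constant $2^{n-1}$ should come from the one remaining radial/reflection degree of freedom in $\omega$ (the two hemispheres, accounting for a factor $2$, and the stretching from $k^+$ back to $\omega$). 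This is exactly the classical Carleman change of variables, so the known formulas can be invoked.

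Once the Jacobian is in hand, the remaining step is purely algebraic: rewrite the argument of $b$ in the new variables. We have $\hat{k}\cdot\omega = \frac{k\cdot\omega}{|k|}$, and from $k^+=\frac{k+|k|\omega}{2}$ one gets $k\cdot\omega = 2|k^+|^2/|k| - |k| $ after using $|k^+|^2 = \frac14(|k|^2 + 2|k|\,k\cdot\omega + |k|^2)$; hence
\[
\hat{k}\cdot\omega = \frac{2|k^+|^2}{|k|^2} - 1 = \frac{2|x|^2}{|x+z|^2} - 1 ,
\]
which is precisely the argument appearing on the right-hand side of \eqref{S2.6}. Assembling the Jacobian factor, the transformed kernel, and relabeling $k^+ = x$, $k^- = z$, $k = x+z$ yields the claimed identity. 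The $C_0(\R^n)$ and $C([-1,1])$ hypotheses guarantee that all integrands are bounded and the Fubini interchange is legitimate.

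The main obstacle will be the bookkeeping in the change of variables: the map $(k,\omega)\mapsto(k^+,k^-)$ is $2$-to-$1$ in a way that is slightly delicate to state (reflecting $\omega$ through the hyperplane orthogonal to $k$ interchanges the roles but also flips $k^+\leftrightarrow k^-$ only partially), and one must be careful to parametrize $\{(x,z):x\cdot z=0\}$ correctly and to check that the map is onto up to a null set so that no part of the hyperplane bundle is missed or double-counted. Getting the precise constant $2^{n-1}$ and the exact powers $|x|^{-1}$ and $|x+z|^{-(n-2)}$ right is where the care is needed; everything else is routine. An alternative that avoids some of this is to first integrate in spherical coordinates $\omega = (\cos\theta)\,\hat{k} + (\sin\theta)\,\sigma$ with $\sigma\in S^{n-2}\subset\hat{k}^\perp$, observe that $k^+$ then has length $|k|\cos(\theta/2)$ along a known direction in the $\hat{k}$--$\sigma$ plane while $k^-$ has length $|k|\sin(\theta/2)$, and build the hyperplane integral over $\{x\cdot z=0\}$ from the $(\theta,\sigma)$ and radial variables directly; I would likely carry out the computation this way and then match it against the stated form.
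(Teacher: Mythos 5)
Your overall strategy is the right one --- this lemma \emph{is} the Carleman change of variables $(k,\omega)\mapsto(k^{+},k^{-})$, and your computation of the argument of $b$, namely $\hat{k}\cdot\omega=\tfrac{2|k^{+}|^{2}}{|k|^{2}}-1=\tfrac{2|x|^{2}}{|x+z|^{2}}-1$, is exactly what the paper uses. But the entire quantitative content of the lemma is the Jacobian factor $2^{n-1}|x|^{-1}|x+z|^{-(n-2)}$, and your proposal leaves precisely that as something you ``expect,'' supported by heuristics that are not quite right: the factor $|x|^{-1}$ does not come from ``the two hemispheres'' or a reflection degree of freedom; it is the coarea factor produced when the constraint $x\cdot z=0$ is converted into a surface integral over the hyperplane, i.e.\ from $\int_{\R^n}\phi(z)\,\delta(x\cdot z)\,\dz=|x|^{-1}\int_{\{x\cdot z=0\}}\phi\,\dpi_z$. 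Also, your flagged ``main obstacle'' is not actually there: the map $(k,\omega)\mapsto(x,z)=(k^{+},k^{-})$ is a bijection (off null sets) onto $\{(x,z):x\cdot z=0,\ x\neq0\}$, with explicit inverse $k=x+z$, $\omega=(x-z)/|x+z|$; the reflection $\omega\mapsto-\omega$ sends $(x,z)$ to $(z,x)$, a different point, so there is no $2$-to-$1$ bookkeeping to worry about.

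The paper avoids the direct nonlinear Jacobian computation altogether by writing $\int_{S^{n-1}}\phi(\omega)\,\dom=\int_{\R^n}\phi(z)\,\delta\bigl(\tfrac{|z|^2-1}{2}\bigr)\dz$ and then performing only \emph{linear} changes of variables for fixed $k$ (first $z\mapsto x=k^{+}$, with determinant $(|k|/2)^{n}$, then $z=k-x$), after which rescaling the argument of the delta produces $2^{n-1}|k|^{2-n}\,\delta(|x|^2-x\cdot k)=2^{n-1}|k|^{2-n}\,\delta(x\cdot z)$ and the coarea identity above gives the $|x|^{-1}$. This is the standard way to make the ``classical Carleman change of variables'' rigorous with no delicate parametrization of the hyperplane bundle, and I would recommend you carry out your argument in that form (or fully execute your spherical-coordinates alternative); as written, the step where the constant $2^{n-1}$ and the weights $|x|^{-1}$, $|x+z|^{-(n-2)}$ are actually derived is missing.
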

\begin{proof}
We follow closely the ideas of Gamba, Panferov and Villani for Carleman's representation in \cite[Lemma 16]{GPV}. For a continuous function $\phi$ we have
\begin{equation}\label{S2.7}
\int_{S^{n-1}}\phi(\omega)\, \dom =\int_{\mathbb{R}^{n}}\phi(z)\, \delta\left(\tfrac{\left|z\right|^{2}-1}{2}\right) \dz
\end{equation}
where $\delta(z)$ is the one-dimensional Dirac measure. From (\ref{S2.7}) we obtain 
\begin{equation*}\label{S2.8}
\int_{\mathbb{R}^{n}} f(k)\mathcal{P}(g,h)(k)\,\dk =\int_{\mathbb{R}^{n}}\int_{\mathbb{R}^{n}}f(k)g(k^{+})h(k^{-})\,\delta\left(\tfrac{\left|z\right|^{2}-1}{2}\right)b(\hat{k}\cdot \hat{z})\,\dz\,\dk\, ,
\end{equation*}
with $k^{\pm}=\frac{k\pm\left|k\right|z}{2}$. We further set $x = k^{+}$. For every $k\neq 0$ fixed, this defines a linear map $z \mapsto x$ with determinant $\left(\frac{\left|k\right|}{2}\right)^{n}$. Using this change of variables we conclude that the previous integral is equal to 
\begin{align}\label{S2.11}
\begin{split}
& \int_{\mathbb{R}^{n}}\int_{\mathbb{R}^{n}}f(k)g(x)h(k-x)\,\delta\left(\tfrac{2(\left|x\right|^{2}-x\cdot k)}{\left|k\right|^{2}}\right)b\left(\hat{k}\cdot \hat{(2x - k)}\right)\left(\tfrac{2}{\left|k\right|}\right)^{n}\dk\,\dx.\\
& = 2^{n-1}\int_{\mathbb{R}^{n}}\int_{\mathbb{R}^{n}}\frac{f(k)}{\left|k\right|^{n-2}}\;g(x)h(k-x)\,\delta\left(\left|x\right|^{2}-x\cdot k\right)b\left(\hat{k}\cdot \hat{(2x - k)}\right)\dk\, \dx.
\end{split}
\end{align}
We now use a second change of variables, $z = k - x$, in (\ref{S2.11}) to obtain
\begin{equation*}\label{S2.12}
2^{n-1}\int_{\mathbb{R}^{n}}\int_{\mathbb{R}^{n}}\frac{f(x+z)}{\left|x+z\right|^{n-2}}\;g(x)h(z)\delta\left(x\cdot z\right)b\left(\hat{(x+z)} \cdot \hat{(x-z)}\right)\dz\,\dx.
\end{equation*}
To conclude, observe that, for $x\neq 0$ and any test function $\phi$,
\begin{equation*}\label{S2.13}
\int_{\mathbb{R}^{n}}\phi(z)\delta(x\cdot z)\,\dz=\left|x\right|^{-1}\int_{\{x\cdot z=0\}}\phi(z)\, \dpi_z\,.
\end{equation*}
\end{proof}
We are now ready to define the radial symmetrizations that will be used in this section. Let $G = SO(n)$ be the group of rotations of $\R^n$, in which we will use the variable $R$ to designate a generic rotation. We assume that the Haar measure $\textrm{d} \mu$ of this compact topological group is normalized so that 
\begin{equation}\label{S2.14}
\int_{G} \dmu = 1.
\end{equation}
Let $f \in L^p(\R^n)$, $p\geq 1$. We define the radial symmetrization $\fs_p$ by
\begin{equation}\label{S2.15}
\fs_p(x) = \left(\int_{G} |f(Rx)|^p \,\dmu\right)^{\tfrac{1}{p}}\,, \ \ \textrm{if} \ \ 1\leq p< \infty.
\end{equation}
and
\begin{equation}\label{S2.15.1}
\fs_{\infty}(x) = \textrm{ess sup}_{|y| =|x|} |f(y)|\,,
\end{equation}
where the essential sup in (\ref{S2.15.1}) is taken over the sphere of radius $|x|$ with respect to the surface measure over this sphere. The new function $\fs_p$ defined in (\ref{S2.15}) can be seen as an $L^p$-average of $f$ over all the rotations $R \in G$ and it satisfies the following properties:
\begin{itemize}
 \item[(i)] $\fs_p$ is radial.
\item[(ii)] If $f$ is continuous (or compactly supported) then $\fs_p$ is also continuous (or compactly supported).
\item[(iii)] If $g$ is a radial function then $(fg)^{\star}_p(x) = \fs_p(x)g(x)$.
\item[(iv)] Let $\textrm{d} \nu$ be a rotationally invariant measure on $\R^n$. Then
$$\int_{\R^n} |f(x)|^p \, \dnu = \int_{\R^n} |\fs_p(x)|^p \, \dnu.$$
In particular,
\begin{equation}\label{S2.15.2}
\|f\|_{L^p(\R^n)} = \|\fs_p\|_{L^p(\R^n)}.
\end{equation}
\end{itemize}
\begin{lemma}\label{Thm2.2}
Let $f$, $g$, $h$ be in $C_0(\R^n)$, b in $C([-1,1])$, and $1/p \,+ \,1/q \,+\, 1/r = 1$, with $1\leq p,q,r \leq \infty$. Then
\begin{equation}\label{S2.16}
\left|\int_{\mathbb{R}^{n}}f(k)\,\mathcal{P}(g,h)(k)\,\dk \right|\leq \int_{\mathbb{R}^{n}}\fs_p(k)\,\mathcal{P}(\gs_q,\hs_r)(k)\, \dk.
\end{equation}
\end{lemma}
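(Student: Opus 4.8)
\emph{The plan} is to bracket the whole computation between two applications of Lemma~\ref{lem2.1}: use it once to unfold $\int f\,\mathcal{P}(g,h)$ into its Carleman-type representation, carry out an averaging-plus-H\"older estimate over the rotation group $G=SO(n)$, and then invoke Lemma~\ref{lem2.1} a second time, in reverse, to recognize the expression $\int \fs_p\,\mathcal{P}(\gs_q,\hs_r)$. Nothing beyond rotational invariance and H\"older's inequality is needed.

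First I would reduce to the case $f,g,h\geq 0$ by replacing each function by its absolute value: since $b\geq 0$ one has $|\mathcal{P}(g,h)(k)|\leq\mathcal{P}(|g|,|h|)(k)$, so the modulus of the left-hand side can only grow, whereas the right-hand side is unaffected because (\ref{S2.15}) already involves $|f|$. Applying Lemma~\ref{lem2.1} gives
\begin{equation*}
\int_{\R^n} f(k)\,\mathcal{P}(g,h)(k)\,\dk = 2^{n-1}\int_{\R^n}\int_{\{x\cdot z=0\}}\frac{f(x+z)\,g(x)\,h(z)}{|x|\,|x+z|^{n-2}}\,b\!\left(\tfrac{2|x|^2}{|x+z|^2}-1\right)\dpi_z\,\dx.
\end{equation*}
The crucial observation is that the right-hand side is unchanged under the simultaneous substitution $x\mapsto Rx$, $z\mapsto Rz$ for each fixed $R\in G$: rotations preserve both the $n$-dimensional and the $(n-1)$-dimensional Lebesgue measures, they preserve the orthogonality relation $x\cdot z=0$, and they fix the scalars $|x|$ and $|x+z|$, hence the argument of $b$. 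Performing this substitution and then integrating over $R$ against the normalized Haar measure --- which by (\ref{S2.14}) leaves the left-hand side untouched --- yields
\begin{equation*}
\int_{\R^n} f(k)\,\mathcal{P}(g,h)(k)\,\dk = 2^{n-1}\int_{\R^n}\int_{\{x\cdot z=0\}}\frac{b\!\left(\tfrac{2|x|^2}{|x+z|^2}-1\right)}{|x|\,|x+z|^{n-2}}\left(\int_G f(R(x+z))\,g(Rx)\,h(Rz)\,\dmu\right)\dpi_z\,\dx.
\end{equation*}

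To the inner integral over $G$ I would apply H\"older's inequality with the three exponents $p,q,r$ --- legitimate exactly because $1/p+1/q+1/r=1$ --- pairing $f$ with $p$, $g$ with $q$, and $h$ with $r$:
\begin{equation*}
\int_G f(R(x+z))\,g(Rx)\,h(Rz)\,\dmu\leq\left(\int_G f(R(x+z))^p\,\dmu\right)^{1/p}\left(\int_G g(Rx)^q\,\dmu\right)^{1/q}\left(\int_G h(Rz)^r\,\dmu\right)^{1/r}.
\end{equation*}
By the very definition (\ref{S2.15}) of the radial symmetrization, these three factors are precisely $\fs_p(x+z)$, $\gs_q(x)$ and $\hs_r(z)$; the endpoint cases in which one of $p,q,r$ equals $\infty$ are treated identically using (\ref{S2.15.1}), together with the fact that for continuous $f$ the essential supremum of $|f|$ over the sphere of radius $|v|$ equals $\sup_{R\in G}|f(Rv)|$. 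Plugging this bound back into the previous display, the right-hand side is exactly the Carleman representation --- that is, Lemma~\ref{lem2.1} read from right to left --- of $\int_{\R^n}\fs_p(k)\,\mathcal{P}(\gs_q,\hs_r)(k)\,\dk$, with $\gs_q,\hs_r$ occupying the two arguments of $\mathcal{P}$ and $\fs_p$ the outer test function; this application is valid since $\fs_p,\gs_q,\hs_r\in C_0(\R^n)$ by property~(ii). This gives the claimed inequality~(\ref{S2.16}).

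The one step that requires genuine care is the rotational invariance of the Carleman representation --- in particular the verification that the map $z\mapsto Rz$ preserves the $(n-1)$-dimensional measure $\dpi_z$ carried by the moving hyperplane $\{x\cdot z=0\}$, together with the harmless fact that the singular locus $\{x=0\}$ contributes nothing; once this is stated cleanly, the rest is routine. (One could also dispense with Lemma~\ref{lem2.1} altogether and run the same averaging--H\"older argument directly on the spherical integral defining $\mathcal{P}$, exploiting the invariance of $\int f(k)\,\mathcal{P}(g,h)(k)\,\dk$ under $k\mapsto Rk$, $\omega\mapsto R\omega$; I nonetheless prefer the Carleman route, since the final recognition step is then literally a second use of a result already in hand.)
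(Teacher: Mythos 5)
Your proposal is correct and follows essentially the same route as the paper's own proof: unfold via the Carleman representation of Lemma~\ref{lem2.1}, exploit rotational invariance by averaging over $G=SO(n)$ with the normalized Haar measure, apply the three-exponent H\"older inequality on the group integral to produce $\fs_p$, $\gs_q$, $\hs_r$, and read Lemma~\ref{lem2.1} backwards to reassemble $\int \fs_p\,\mathcal{P}(\gs_q,\hs_r)$. The only cosmetic difference is that you reduce to nonnegative functions at the outset, whereas the paper moves absolute values inside after the rotation step; both are fine.
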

\begin{proof}
We use here representation (\ref{S2.6}). If $R$ is a rotation in $\R^n$, by a change of variables we obtain
\begin{align}\label{S2.17}
\begin{split}
&\left|\int_{\mathbb{R}^{n}} f(k)\mathcal{P}(g,h)(k)\,\dk \right| \\ 
& = 2^{n-1}\left|\int_{\mathbb{R}^{n}}\frac{g(x)}{\left|x\right|}\int_{\left\{x\cdot z=0\right\}}\frac{f(x+z)}{\left|x+z\right|^{n-2}}\;h(z)\;b\left(\tfrac{2|x|^{2}}{|x+z|^{2}}-1\right)\dpi_z\,\dx \right| \\
& = 2^{n-1}\left|\int_{\mathbb{R}^{n}}\frac{g(Rx)}{\left|x\right|}\int_{\left\{x\cdot z=0\right\}}\frac{f(Rx + Rz)}{\left|x+z\right|^{n-2}}\;h(Rz)\;b\left(\tfrac{2|x|^{2}}{|x+z|^{2}}-1\right)\dpi_z\,\dx\right| \\
& \leq 2^{n-1}\int_{\mathbb{R}^{n}}\frac{|g(Rx)|}{\left|x\right|}\int_{\left\{x\cdot z=0\right\}}\frac{|f(Rx + Rz)|}{\left|x+z\right|^{n-2}}\;|h(Rz)|\;b\left(\tfrac{2|x|^{2}}{|x+z|^{2}}-1\right)\dpi_z\,\dx.
\end{split}
\end{align}
Observe that the left hand side of (\ref{S2.17}) does not depend on the rotation $R$. Therefore, when we integrate over the group $G = SO(n)$ using (\ref{S2.14}) we find
\begin{align}\label{S2.18}
\begin{split}
&\left|\int_{\mathbb{R}^{n}} f(k)\mathcal{P}(g,h)(k)\,\dk\right| = \int_G \left|\int_{\mathbb{R}^{n}} f(k)\mathcal{P}(g,h)(k)\,\dk\right| \,\dmu\\   
& \leq 2^{n-1}\int_G \int_{\mathbb{R}^{n}}\frac{|g(Rx)|}{\left|x\right|}\int_{\left\{x\cdot z=0\right\}}\frac{|f(Rx + Rz)|}{\left|x+z\right|^{n-2}}\;|h(Rz)|\,b\left(\tfrac{2|x|^{2}}{|x+z|^{2}}-1\right)\dpi_z\,\dx \,\dmu.
\end{split}
\end{align}
By Fubini's theorem and H\"{o}lder's inequality we see that the right hand side of (\ref{S2.18}) is 
\begin{align*}\label{S2.19}
\begin{split}
= &\, \,2^{n-1}\int_{\mathbb{R}^{n}}\int_{\left\{x\cdot z=0\right\}}  \int_G |g(Rx)|\,|f(Rx + Rz)|\,|h(Rz)|\,\dmu\,\frac{b\left(\tfrac{2|x|^{2}}{|x+z|^{2}}-1\right)}{\left|x\right|\,\left|x+z\right|^{n-2}}\,\dpi_z\,\dx\\
\leq & \,\,2^{n-1}\int_{\mathbb{R}^{n}}\int_{\left\{x\cdot z=0\right\}} \left(\int_G |g(Rx)|^q\, \dmu\right)^{\tfrac{1}{q}}  \left( \int_G |f(Rx + Rz)|^p\, \dmu \right)^{\tfrac{1}{p}} \\
&   \ \ \ \ \ \  \ \ \ \ \ \ \ \  \ \ \ \  \  \left( \int_G |h(Rz)|^r \, \dmu \right)^{\tfrac{1}{r}}\ \ \frac{b\left(\tfrac{2|x|^{2}}{|x+z|^{2}}-1\right)}{\left|x\right|\,\left|x+z\right|^{n-2}}\,\dpi_z\,\dx\\
= &  \, \,2^{n-1}\int_{\mathbb{R}^{n}}\int_{\left\{x\cdot z=0\right\}} \frac{\gs_q(x)}{\left|x\right|}\frac{\fs_p(x + z)}{\left|x+z\right|^{n-2}}\,\,  \hs_r(z)\, b\left(\frac{2|x|^{2}}{|x+z|^{2}}-1\right) \dpi_z\,\dx\\ 
= & \int_{\mathbb{R}^{n}}\fs_p(k)\,\mathcal{P}(\gs_q,\hs_r)(k)\, \dk\,,
\end{split}
\end{align*}
and this concludes the proof.
\end{proof}

Lemma \ref{Thm2.2} shows that, in order to obtain $L^p$-estimates for the operator $\mc{P}$, it suffices to consider its action on radial functions. We explain briefly how to reduce this problem to a one-dimensional analogue, and as we move on, we introduce some additional notation.

Let $f:\R^n \to \R^n$ be a radial function. We define the function $\wf:\R^{+} \to \R$ by
\begin{equation}\label{S2.20}
f(k) = \wf(|k|^2).
\end{equation}
Observe that for any $p\geq 1$ and $\alpha \in \R$ we have
\begin{align}\label{S2.21}
\begin{split}
\int_{\mathbb{R}^{n}}f(k)^{p}\,|k|^{\alpha}\,\dk & =\int_{S^{n-1}}\int^{\infty}_{0}\wf(\left|k\right|^{2})^{p}\,\left|k\right|^{n+\alpha-1}\textrm{d}\!\left|k\right|\dom \\
& =\tfrac{\left|S^{n-1}\right|}{2}\int^{\infty}_{0}\wf(x)^{p}\,\dsig(x),
\end{split}
\end{align}
where 
\begin{equation}\label{S2.22}
\dsig(x)=x^{(n+\alpha-2)/2}\,\dx.
\end{equation}
Hence, if we define the measure $\nu_{\alpha}$ on $\R^n$ by
\begin{equation}\label{S2.23}
\dnua(k) = |k|^{\alpha} \, \dk,
\end{equation}
equation (\ref{S2.21}) translates to 
\begin{equation}\label{S2.24}
\|f\|_{L^p(\R^n, \,\dnua)} = \Bigl(\tfrac{\left|S^{n-1}\right|}{2}\Bigr)^{\tfrac{1}{p}}\,\|\wf\ \|_{L^{p}(\R^{+},\,\dsig)}.
\end{equation}
From definitions (\ref{S2.3}) and (\ref{S2.20}) we observe that for radially symmetric functions $g$ and $h$ we have
\begin{align}\label{S2.25}
\begin{split}
\mathcal{P}(g&,h)(k)=\int_{S^{n-1}}\wg(\left|k^{+}\right|^{2})\,\wh(\left|k^{-}\right|^{2})\,b(\hat{k}\cdot\omega)\,\dom\\
&=\int_{S^{n-1}}\wg\left(\left|k\right|^{2}\frac{1+\hat{k}\cdot\omega}{2}\right)\,\wh\left(\left|k\right|^{2}\frac{1-\hat{k}\cdot\omega}{2}\right)b(\hat{k}\cdot\omega)\,\dom\\
& = \left|S^{n-2}\right|\int_{-1}^{1}\wg\left(\left|k\right|^{2}\frac{1+ s}{2}\right)\,\wh\left(\left|k\right|^{2}\frac{1-s}{2}\right)b(s)\,(1-s^2)^{\tfrac{n-3}{2}}\, \textrm{d} s\\
& = 2^{n-2}\left|S^{n-2}\right|\int_{0}^{1}\wg\bigl(\left|k\right|^{2}z\bigr)\,\wh\bigl(\left|k\right|^{2} (1-z) \bigr)\,b(2z-1)\,[z(1-z)]^{\tfrac{n-3}{2}}\, \textrm{d} z.
\end{split}
\end{align}
By defining the new measure $\xi_n^{b}$ on $[0,1]$,
\begin{equation}\label{S2.26}
\dxi(z) = b(2z-1)\,[z(1-z)]^{\tfrac{n-3}{2}}\, \textrm{d} z\, ,
\end{equation}
and using (\ref{S2.20}), we can rewrite equation (\ref{S2.25}) as 
\begin{equation}\label{S2.27}
\widetilde{\mc{P}(g,h)}(x) = 2^{n-2}\left|S^{n-2}\right|\int_{0}^{1}\wg(xz)\,\wh\bigl(x (1-z)\bigr)\,\dxi(z).
\end{equation}
The purpose of the next subsection is to study the new integral operator defined in (\ref{S2.27}).
\subsection{The bilinear operator $\mc{B}(g,h)$}
Motivated by (\ref{S2.27}), for functions $g:\R^{+} \to \R$ and $h:\R^{+} \to \R$, we define $\mc{B}(g,h):\R^{+} \to \R$ by
\begin{equation}\label{S2.28}
\mc{B}(g,h)(x) = \int_{0}^{1}g(xz)\,h\bigl(x (1-z)\bigr)\,\dxi(z).
\end{equation}
In what follows we will use the function $\beta_b(x,y)$  already defined in the Introduction of this paper
\begin{equation}\label{S2.29}
\beta_{b}(x,y):=\int^{1}_{0}z^{x}\,(1-z)^{y}\,\dxi(z).
\end{equation}
The main result of this subsection is described below.
\begin{lemma}\label{Thm2.3}
For $g \in L^p(\R^{+}, \dsig)$ and $h \in L^q(\R^{+}, \dsig)$, we have

\begin{equation}\label{S2.30}
\left\|\mathcal{B}(g,h)\right\|_{L^r(\R^{+},\, \dsig)}\leq \beta_{b}\left(-\tfrac{n+\alpha}{2p},-\tfrac{n+\alpha}{2q}\right)\left\|g\right\|_{L^p(\R^{+},\, \dsig)}\left\|h\right\|_{L^q(\R^{+},\, \dsig)}.
\end{equation}
\\
where $1/p + 1/q = 1/r$, with $1\leq p,q,r\leq \infty$. The constant
\begin{equation}\label{S2.30.2}
 C(n,\alpha,p,q,b)= \beta_{b}\left(-\tfrac{n+\alpha}{2p},-\tfrac{n+\alpha}{2q}\right)
\end{equation}
is sharp.
\end{lemma}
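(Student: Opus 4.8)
The plan is to prove the bound \eqref{S2.30} by a direct computation, reducing it to a one-dimensional weighted convolution inequality via a logarithmic change of variables, and then appealing to the classical Young inequality on the multiplicative group $\R^+$. First I would substitute $x = e^t$ to turn the dilation structure $g(xz)$, $h(x(1-z))$ into a genuine convolution. More precisely, writing $g(xz) = g(e^{t+\log z})$ and absorbing powers of $x$ coming from $\dsig$, one sets, for the exponent $a := (n+\alpha)/2$,
\begin{equation*}
G(t) = e^{at/p}\,\wg(e^t), \quad H(t) = e^{at/q}\,\wh(e^t), \quad F(t) = e^{at/r}\,\widetilde{\B(\wg,\wh)}(e^t),
\end{equation*}
so that $\|g\|_{L^p(\R^+,\dsig)}^p = \int_{\R} |G(t)|^p \,\dt$ up to the constant $a$ absorbed in the measure, and similarly for $h$ and $\B(g,h)$. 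Under this substitution the operator \eqref{S2.28} becomes
\begin{equation*}
F(t) = \int_0^1 G(t + \log z)\,H(t + \log(1-z))\, z^{-a/p}(1-z)^{-a/q}\,\dxi(z),
\end{equation*}
where the weight $z^{-a/p}(1-z)^{-a/q}$ is exactly what makes the constant $\beta_b(-a/p, -a/q)$ appear.

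The second step is to recognize the right-hand side as a weighted average of translates of the product-type kernel. I would apply Minkowski's integral inequality in $L^r$ with respect to the measure $z^{-a/p}(1-z)^{-a/q}\,\dxi(z)$ on $[0,1]$:
\begin{equation*}
\|F\|_{L^r(\R)} \leq \int_0^1 \big\| G(\cdot + \log z)\,H(\cdot + \log(1-z))\big\|_{L^r(\R)}\, z^{-a/p}(1-z)^{-a/q}\,\dxi(z).
\end{equation*}
For each fixed $z$, Hölder's inequality with $1/p + 1/q = 1/r$ gives $\| G(\cdot+\log z)\,H(\cdot+\log(1-z))\|_{L^r} \le \|G\|_{L^p}\,\|H\|_{L^q}$, since translation is an isometry on every $L^s(\R)$. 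Pulling the (now $z$-independent) factor $\|G\|_{L^p}\|H\|_{L^q}$ out of the integral leaves exactly $\beta_b(-a/p,-a/q) = \beta_b\!\left(-\tfrac{n+\alpha}{2p}, -\tfrac{n+\alpha}{2q}\right)$ as the multiplicative constant, which is the claimed inequality. The endpoint cases $p,q,$ or $r = \infty$ are handled by the obvious pointwise modifications (replacing the relevant $L^s$ norm by a sup). I should also check the degenerate case $\dsig$ not being a finite measure does not cause trouble: it does not, because the estimate is homogeneous and the substitution is a bijection of $\R^+$ onto $\R$.

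The main obstacle — and the more delicate half of the lemma — is the sharpness of the constant \eqref{S2.30.2}. Here I would exhibit near-extremizers. The natural guess, motivated by the scaling that makes Hölder an equality, is to test with power functions $\wg(x) = x^{-a/p}$, $\wh(x) = x^{-a/q}$ (equivalently $G, H$ constant after the change of variables); these are not in the respective $L^p,L^q$ spaces, so one truncates them to $x \in [A, B]$ or localizes $G, H$ to a long interval $[-M, M]$ and lets $M \to \infty$. On such test functions $\B(\wg,\wh)(x) = \beta_b(-a/p,-a/q)\, x^{-a/r}$ exactly (pointwise, on an interval slightly smaller than the support, away from boundary effects), so the ratio $\|\B(g,h)\|_{L^r}/(\|g\|_{L^p}\|h\|_{L^q})$ tends to $\beta_b(-a/p,-a/q)$ as $M \to \infty$; the boundary layers where the truncation matters contribute an $o(M^{1/r})$ error and wash out. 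The one subtlety to watch is when $r = \infty$ (so $p = q = \infty$), where one instead takes $G \equiv H \equiv 1$ and the constant is achieved exactly, and when $p$ or $q$ is $1$ or $\infty$, where the truncation argument needs the corresponding trivial adjustment. Assembling these cases gives that no smaller constant works, completing the proof.
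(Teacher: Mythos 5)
Your proposal is correct and follows essentially the same route as the paper: the upper bound is Minkowski's inequality in the $z$-variable followed by H\"{o}lder's inequality exploiting dilation invariance of $\dsig$ (your logarithmic substitution $x=e^t$ merely rewrites this scaling as translation invariance on $\R$), and the sharpness is obtained from truncated power-function near-extremizers, just as in the paper's choice $g_{\epsilon}(x)=\epsilon^{1/p}x^{-(n+\alpha-2\epsilon)/2p}\mathbf{1}_{(0,1)}(x)$. The only minor point worth noting is that the paper's one-sided truncation to $(0,1)$ avoids the endpoint boundary layers in $z$ that your two-sided localization to $[-M,M]$ introduces, though these are harmless by dominated convergence since $\beta_b\left(-\tfrac{n+\alpha}{2p},-\tfrac{n+\alpha}{2q}\right)<\infty$.
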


\begin{proof}
Using Minkowski's inequality we obtain
\begin{align}\label{S2.31}
\begin{split}
\left\|\mathcal{B}(g,h)\right\|&_{L^r(\R^{+},\, \dsig)} \leq \left(\int^{\infty}_{0}\left(\int^{1}_{0}\left|g(xz)\right|\left|h\bigl(x(1-z)\bigr)\right|\dxi(z)\right)^{r}\dsig(x)\right)^{\tfrac{1}{r}}\\
& \leq \int^{1}_{0}\left(\int^{\infty}_{0}\left|g(xz)\right|^{r}\left|h\bigl(x(1-z)\bigr)\right|^{r}\dsig(x)\right)^{\tfrac{1}{r}}\dxi(z).
\end{split}
\end{align}
Next, we use H\"{o}lder's inequality with exponents $p/r$ and $q/r$ in the inner integral
\begin{align}\label{S2.32}
\begin{split}
& \left(\int^{\infty}_{0} \left|g(xz)\right|^{r}\left|h\bigl(x(1-z)\bigr)\right|^{r}\dsig(x)\right)^{\tfrac{1}{r}}\\
& \ \ \ \ \ \ \ \  \ \  \ \ \ \leq  \left(\int^{\infty}_{0}\left|g(xz)\right|^{p}\dsig(x)\right)^{\tfrac{1}{p}}\left(\int^{\infty}_{0}\left|h\bigl(x(1-z)\bigr)\right|^{q}\dsig(x)\right)^{\tfrac{1}{q}}\\
&\ \ \ \ \ \ \ \  \ \  \ \ \ = z^{-\tfrac{n+\alpha}{2p}}\, (1-z)^{-\tfrac{n+\alpha}{2q}}\, \left\|g\right\|_{L^p(\R^{+},\, \dsig)}\left\|h\right\|_{L^q(\R^{+},\, \dsig)}.\\
\end{split}
\end{align}
The boundedness of the operator $\mc{B}$ proposed in (\ref{S2.30}) follows easily from (\ref{S2.31}) and (\ref{S2.32}).

To prove that the constant $C(n,\alpha,p,q,b)$ defined in (\ref{S2.30.2}) is indeed sharp, we exhibit a pair of sequences $\{g_{\epsilon}\}$ and $\{h_{\epsilon}\}$ with $\epsilon \to 0$ satisfying 
\begin{equation}\label{S2.33}
 \left\|g_{\epsilon}\right\|_{L^p(\R^{+},\, \dsig)} = \left\|h_{\epsilon}\right\|_{L^q(\R^{+},\, \dsig)} = 1\,,
\end{equation}
for any $\epsilon >0$, and 
\begin{equation}\label{S2.34}
 \displaystyle\lim_{\epsilon \to 0} \left\|\mathcal{B}(g_{\epsilon},h_{\epsilon})\right\|_{L^r(\R^{+},\, \dsig)} = C(n,\alpha,p,q,b).
\end{equation}
Define the sequences by
\begin{equation*}\label{S2.35}
g_{\epsilon}(x)=\left\{
\begin{array}{cl}
\epsilon^{1/p}\;x^{-(n+\alpha-2\epsilon)/2p}&\mbox{for}\;\;0< x< 1\, ,\\
0&\mbox{otherwise},
\end{array}
\right.
\end{equation*}
and
\begin{equation*}\label{S2.36}
h_{\epsilon}(x)=\left\{
\begin{array}{cl}
\epsilon^{1/q}\;x^{-(n+\alpha- 2\epsilon)/2q}&\mbox{for}\;\;0< x< 1,\\
0&\mbox{otherwise}.
\end{array}
\right.
\end{equation*}
A direct computation shows (\ref{S2.33}). In order to prove (\ref{S2.34}), we estimate $\mathcal{B}(g_{\epsilon},h_{\epsilon})(x)$ in three different intervals, namely:\\
\\
\underline{For $0<x\leq 1$:} In this interval,
\begin{align}\label{S2.37}
\begin{split}
\mathcal{B}(g_{\epsilon},h_{\epsilon})(x)&=\epsilon^{1/r}\;x^{-(n+\alpha-2\epsilon)/2r}\int^{1}_{0}z^{-(n+\alpha-2\epsilon)/2p}\;(1-z)^{-(n+\alpha-2\epsilon)/2q}\, \dxi(z)\\
&=\epsilon^{1/r}\;x^{-(n+\alpha-2\epsilon)/2r}\beta_{b}\left(-\tfrac{n+\alpha-2\epsilon}{2p},-\tfrac{n+\alpha-2\epsilon}{2q}\right).
\end{split}
\end{align}
\underline{For $1<x\leq 2$:} In this interval we use the same estimate as before 
\begin{align}\label{S2.38}
\begin{split}
\mathcal{B}(g_{\epsilon},h_{\epsilon})(x)&=\epsilon^{1/r}\;x^{-(n+\alpha-2\epsilon)/2r}\int^{1/x}_{1-1/x}z^{-(n+\alpha-2\epsilon)/2p}\;(1-z)^{-(n+\alpha-2\epsilon)/2q}\, \dxi(z)\\
&\leq \epsilon^{1/r}\;x^{-(n+\alpha-2\epsilon)/2r}\beta_{b}\left(-\tfrac{n+\alpha-2\epsilon}{2p},-\tfrac{n+\alpha-2\epsilon}{2q}\right).
\end{split}
\end{align}
\underline{For $x>2$:} Here we have
\begin{equation*}\label{S2.39}
\mathcal{B}(g_{\epsilon},h_{\epsilon})(x) =0.
\end{equation*}
Therefore,
\begin{align*}\label{S2.40}
\begin{split}
\left\|\mathcal{B}(g_{\epsilon},h_{\epsilon})\right\|^{r}_{L^r(\R^{+}, \dsig)}& =\int^{1}_{0}\mathcal{B}(g_{\epsilon},h_{\epsilon})(x)^{r} \dsig(x)+\int^{2}_{1}\mathcal{B}(g_{\epsilon},h_{\epsilon})(x)^{r}\dsig(x)\\
\\
& :=\mbox{ (I)}_{\epsilon}+\mbox{ (II)}_{\epsilon}.
\end{split}
\end{align*}
From (\ref{S2.37}) and (\ref{S2.38}) we conclude that, as $\epsilon \to 0$,
\begin{equation*}\label{S2.41}
\mbox{(I)}_{\epsilon}\to \beta_{b}\left(-\tfrac{n+\alpha}{2p},-\tfrac{n+\alpha}{2q}\right)^{r} \ \ \ \ \textrm{and}\ \ \ \ \ \mbox{(II)}_{\epsilon}\rightarrow 0\,,
\end{equation*}
which establishes (\ref{S2.34}) and finishes the proof.
\end{proof}
\subsection{Sharp $L^p$-estimates for the operator $\mc{P}(g,h)$}
We are now in position to prove Theorem \ref{thm1} presented in the Introduction of this paper. Let $f$, $g$ and $h$ be in $C_0(\R^n)$. From Lemma \ref{Thm2.2} and a standard approximation argument we see that inequality (\ref{S2.16}) is valid for general angular functions $b:[-1,1] \to \R^{+}$ satisfying the Grad's cut-off assumption (\ref{Intro13.1}).
\begin{proof}[Proof of Theorem \ref{thm1}] Let $r'$ be the dual exponent of $r$. By duality and H\"{o}lder's inequality, together with Lemma \ref{Thm2.2} applied to a function $f_1(k) = f(k)|k|^{\alpha}$, where $f \in C_0(\R^n)$ and vanishes in a neighborhood of the origin, we have
\begin{align}\label{S2.45}
\begin{split}
\|\mathcal{P}&(g,h)\|_{L^r(\R^n, \dnua)}  =  \displaystyle\sup_{\|f\|_{r', \dnua}= 1} \left|\int_{\mathbb{R}^{n}}f(k)\,\mathcal{P}(g,h)(k)\,\dnua(k) \right|\\
& \leq \displaystyle\sup_{\|f\|_{r',\dnua} = 1}\int_{\mathbb{R}^{n}}\fs_{r'}(k)\,\mathcal{P}(\gs_p,\hs_q)(k)\, \dnua(k)  \  \leq \  \left\|\mathcal{P}(\gs_p,\hs_q)\right\|_{L^r(\R^n, \dnua)}.
\end{split}
\end{align}
Combining (\ref{S2.45}) with (\ref{S2.24}), (\ref{S2.27}), (\ref{S2.30}) we obtain
\begin{align*}
\begin{split}
&\left\|\mathcal{P}(\gs_p,\hs_q)\right\|_{L^r(\R^n, \dnua)} = \Bigl(\tfrac{\left|S^{n-1}\right|}{2}\Bigr)^{\tfrac{1}{r}}\,\left\|\widetilde{\mc{P}(\gs_p,\hs_q)}\right\|_{L^{r}(\R^{+},\,\dsig)}\\
& = \Bigl(\tfrac{\left|S^{n-1}\right|}{2}\Bigr)^{\tfrac{1}{r}}2^{n-2}\left|S^{n-2}\right| \left\|\mc{B} (\widetilde{\gs_p}, \widetilde{\hs_q})\right\|_{L^{r}(\R^{+},\,\dsig)}\\
& \leq \Bigl(\tfrac{\left|S^{n-1}\right|}{2}\Bigr)^{\tfrac{1}{r}}2^{n-2}\left|S^{n-2}\right| \beta_{b}\left(-\tfrac{n+\alpha}{2p},-\tfrac{n+\alpha}{2q}\right) \left\|\widetilde{\gs_p}\right\|_{L^p(\R^{+},\, \dsig)}\left\|\widetilde{\hs_q}\right\|_{L^q(\R^{+},\, \dsig)}\\
&= 2^{n-2}\left|S^{n-2}\right| \beta_{b}\left(-\tfrac{n+\alpha}{2p},-\tfrac{n+\alpha}{2q}\right)  \left\|\gs_p\right\|_{L^p(\R^n,\, \dnua)}\left\|\hs_q\right\|_{L^q(\R^n,\, \dnua)}\\
& = 2^{n-2}\left|S^{n-2}\right| \beta_{b}\left(-\tfrac{n+\alpha}{2p},-\tfrac{n+\alpha}{2q}\right)  \left\|g\right\|_{L^p(\R^n,\, \dnua)}\left\|h\right\|_{L^q(\R^n,\, \dnua)}.
\end{split}
\end{align*}
The fact that the constant is sharp follows easily from the sequence of functions constructed in the proof of Lemma \ref{Thm2.3}.
\end{proof}


\section{Young's inequality for the gain collision operator}

The goal of this section is to prove the Young's inequality for the gain term of the Boltzmann collision operator $Q^{+}$, in the case of hard potentials, proposed in Theorem \ref{thm2}. We start with a simple lemma that relates the full collision operator and the Maxwellian molecules operator by means of the operator $\mc{P}(g,h)$ studied in Section 2 (this is related to equation (2.6) in \cite{MV}). Throughout this section we may assume that all the functions are nonnegative (motivated by the solutions of the Boltzmann equation) to avoid technicalities when defining some integrals. 

In what follows we denote the translation and reflection by
$$\tau_{v}g(x):= g(x-v) \ \ \ \ \textrm{and} \ \ \ \ \ \mc{R}g(x):= g(-x).$$
\begin{lemma}\label{lem3.1}
Assume that the kernel 
\begin{equation*}\label{S3.1}
B(|u|,\hat{u}\cdot \omega)=\Phi(|u|)b(\hat{u}\cdot \omega)
\end{equation*}
satisfies $\Phi \in C(\R^{+})$ and $b \in C([-1,1])$. Assume also that $g,h \in C_0(\R^n)$. The full collision and the Maxwellian molecules operator are related by the formula
\begin{equation}\label{S3.2}
Q^{+}(g,h)(v)=\int_{\mathbb{R}^{n}}\Phi(|k|)\,\mathcal{P}\left(\tau_{-v}\,g,\tau_{-v}\,h\right)(k)\,\dk.
\end{equation}
\end{lemma}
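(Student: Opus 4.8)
The plan is to start from the definition of $Q^+(g,h)(v)$ in \eqref{Intro2}, insert the factorization $B(|u|,\hat u\cdot\omega)=\Phi(|u|)b(\hat u\cdot\omega)$ from \eqref{Intro6}, and perform the change of variables $v_* \mapsto u = v - v_*$ so that the integration over $\R^n$ is in the relative velocity $u$. Under this substitution $\dvs = \du$ (the Jacobian is $1$), and the post-collisional velocities become $v' = v + \tfrac{|u|}{2}(\hat u - \hat u)\cdots$ — more precisely, using $V = \tfrac{v+v_*}{2} = v - \tfrac{u}{2}$ one gets $v' = v - \tfrac{u}{2} + \tfrac{|u|}{2}\omega$ and $v'_* = v - \tfrac{u}{2} - \tfrac{|u|}{2}\omega$. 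So I would rewrite
\begin{equation*}
Q^+(g,h)(v) = \int_{\R^n}\int_{S^{n-1}} g\!\left(v - \tfrac{u}{2} + \tfrac{|u|}{2}\omega\right) h\!\left(v - \tfrac{u}{2} - \tfrac{|u|}{2}\omega\right)\Phi(|u|)\,b(\hat u\cdot\omega)\,\dom\,\du.
\end{equation*}

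Next I would recognize the inner integrand in terms of the quantities $k^{\pm}$ of \eqref{S2.4}. Writing $k := u$, we have $k^+ = \tfrac{k + |k|\omega}{2}$ and $k^- = \tfrac{k - |k|\omega}{2}$, and since $v - \tfrac{u}{2} + \tfrac{|u|}{2}\omega = v - \tfrac{k}{2} + \tfrac{|k|}{2}\omega$, a short computation gives $v - \tfrac{k}{2} + \tfrac{|k|}{2}\omega = k^+ + v - k = k^+ - k^- + v - ?$ — I need to be careful here, so let me instead note that the relevant shift is by $-\tfrac{k}{2}$ in both arguments; but $k^+ - \tfrac{k}{2} = \tfrac{|k|\omega}{2} - \cdots$. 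The cleanest route: observe $v - \tfrac{k}{2} + \tfrac{|k|}{2}\omega = (v - k) + k^+$ is false; rather $k^+ = \tfrac{k}{2} + \tfrac{|k|}{2}\omega$, so $v - \tfrac{k}{2} + \tfrac{|k|}{2}\omega = v - k + k^+$. Hmm, that still isn't a pure translate of $k^+$. The correct identity is $v - \tfrac{k}{2} + \tfrac{|k|}{2}\omega = v - (k^+ + k^-) + k^+ = v - k^-$... no: $k^+ + k^- = k$, and $k^+ - k^- = |k|\omega$, so $-\tfrac{k}{2} + \tfrac{|k|}{2}\omega = -\tfrac{k^+ + k^-}{2} + \tfrac{k^+ - k^-}{2} = -k^-$. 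Therefore $g$'s argument is $v - k^-$... but we want it in terms of $k^+$. Since $\mc{P}$ has the form $g(k^+)h(k^-)$, and here we have (after the shift) $g$ evaluated at $v - k^-$ and $h$ at $v - k^+$ — wait, symmetrically $-\tfrac{k}{2} - \tfrac{|k|}{2}\omega = -k^+$. So $g$'s argument is $v - k^-$ and $h$'s argument is $v - k^+$. That is $(\tau_v g)(-k^-) \cdot$ — using $\mc{R}$ and $\tau$: $g(v - k^-) = (\tau_v \mc{R} g)(k^-)$? Let me just record that by a reflection $\omega \mapsto -\omega$ in the $S^{n-1}$ integral (which swaps $k^+ \leftrightarrow k^-$ and fixes $b(\hat k\cdot\omega)$ only if $b$ is even — here $\hat k\cdot\omega \mapsto -\hat k\cdot\omega$, so this swap is legitimate only with care), or more simply by relabeling, the inner integral equals $\mc{P}(\tau_{-v} g, \tau_{-v} h)(k)$ up to getting the $\tau$ signs right; I will verify the exact form of the shift against the definition $\tau_v g(x) = g(x-v)$, so that $g(v - k^-)$... actually $g(x - k^+)$ with $x$ replaced appropriately — the point is it is a translate, and the $b$ argument matches $b(\hat k\cdot\omega)$ exactly since $k = u$ so $\hat k = \hat u$.

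With the inner $S^{n-1}$ integral identified as $\mc P(\tau_{-v}g,\tau_{-v}h)(k)$ (I will pin down the precise translation/reflection bookkeeping by direct substitution into \eqref{S2.3}, checking both the $k^+$/$k^-$ assignment and that the $b$-argument $\hat k\cdot\omega$ is untouched), and the factor $\Phi(|u|) = \Phi(|k|)$ pulled out, Fubini's theorem (justified since $g,h \in C_0(\R^n)$, $\Phi \in C(\R^+)$, $b \in C([-1,1])$, so everything is continuous with compact support in the relevant variables) yields exactly \eqref{S3.2}. The main obstacle, and the only genuinely delicate point, is getting the algebra of the change of variables right: confirming that the substitution $v_* \mapsto u$ sends the pair $(v', v'_*)$ to arguments that are precisely the translates $\tau_{-v}$ applied to the arguments $k^{\pm}$ appearing in $\mc P$, including which of $g,h$ pairs with $k^+$ versus $k^-$, and that no stray reflection $\mc R$ is needed (or that if one appears it is absorbed by the symmetry $\omega \mapsto -\omega$ of the spherical integral combined with the structure of $b$). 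Everything else is a routine application of Fubini.
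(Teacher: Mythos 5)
Your route---a direct change of variables $v_*\mapsto u$ in the definition (\ref{Intro2}) followed by identification of the inner spherical integral with $\mc{P}$---is genuinely different from the paper's, which never manipulates the definition of $Q^{+}$ at all: the paper quotes the Carleman representation of $Q^{+}$ from \cite[Lemma 16]{GPV} and matches it term by term against the Carleman-type representation of $\int f\,\mc{P}(g,h)\,\dk$ proved in Lemma \ref{lem2.1}, with $f(k)=\Phi(|k|)$ and $g,h$ translated. Your approach is in principle more elementary and self-contained, but the proposal stops exactly at the step you yourself call ``the only genuinely delicate point,'' and that step does not close in the way you hope. Completing your own algebra: with $k=u$ one has $v'=v-k^{-}$ and $v'_*=v-k^{+}$, hence
\begin{equation*}
Q^{+}(g,h)(v)=\int_{\R^n}\Phi(|k|)\int_{S^{n-1}}g(v-k^{-})\,h(v-k^{+})\,b(\hat{k}\cdot\omega)\,\dom\,\dk .
\end{equation*}
The substitution $\omega\mapsto-\omega$ alone swaps $k^{+}\leftrightarrow k^{-}$ but turns $b(\hat{k}\cdot\omega)$ into $b(-\hat{k}\cdot\omega)$, exactly as you feared, so it is unavailable for general $b$. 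The substitution that does preserve both $b(\hat{k}\cdot\omega)$ and $\Phi(|k|)$ is $(k,\omega)\mapsto(-k,-\omega)$, under which $k^{\pm}\mapsto-k^{\pm}$; it gives
\begin{equation*}
Q^{+}(g,h)(v)=\int_{\R^n}\Phi(|k|)\int_{S^{n-1}}g(k^{-}+v)\,h(k^{+}+v)\,b(\hat{k}\cdot\omega)\,\dom\,\dk=\int_{\R^n}\Phi(|k|)\,\mc{P}\left(\tau_{-v}h,\tau_{-v}g\right)(k)\,\dk ,
\end{equation*}
i.e.\ the slots of $g$ and $h$ come out interchanged relative to (\ref{S3.2}); equivalently, you obtain (\ref{S3.2}) with $b(s)$ replaced by $b(-s)$.

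So the gap is concrete: the direct computation produces the stated identity only when $b$ is even, and no combination of translations, reflections $\mc{R}$, or relabelings repairs this for general $b$, because the obstruction is the chirality of the kernel, not bookkeeping of the $\tau$ signs. To finish along your lines you must either add the hypothesis that $b$ is even (harmless in most applications, and invisible for the quadratic case $Q(f,f)$ since $\mc{P}(F,F)$ with kernel $b(s)$ equals $\mc{P}(F,F)$ with kernel $b(-s)$), prove the swapped identity instead and propagate it through Section 3, or carefully fix the orientation convention of the $\omega$-representation you are implicitly re-deriving; the same convention question is buried in the paper's proof inside the citation of \cite[Lemma 16]{GPV}. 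The remaining ingredients of your outline---unit Jacobian of $v_*\mapsto u$, factoring out $\Phi(|k|)$, Fubini for continuous compactly supported integrands---are indeed routine.
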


\begin{proof}
This relation is a consequence of Carleman's representation (\cite[Lemma 16]{GPV})
\begin{align}\label{Car}
\begin{split}
Q^{+}&(g,h)(v)=\\
& 2^{n-1}\int_{\mathbb{R}^{n}}\frac{g(x+v)}{\left|x\right|}\int_{\left\{x\cdot z=0\right\}}\frac{h(z+v)}{\left|x+z\right|^{n-2}}\,B\left(-(x+z),\tfrac{x+z}{|x+z|}\cdot\tfrac{x-z}{|x+z|}\right) \dpi_z\,\dx.
\end{split}
\end{align}
Note that if $B(|u|,\hat{u}\cdot \omega)$ can be expressed as a product of a magnitude part by an angular part one obtains
\begin{equation*}
B\left(-(x+z),\tfrac{x+z}{|x+z|}\cdot\tfrac{x-z}{|x+z|}\right)=\Phi(|x+z|)\,b\left(\tfrac{x+z}{|x+z|}\cdot\tfrac{x-z}{|x+z|}\right).
\end{equation*}
However, in the hyperplane $\{x\cdot z=0\}$ the angular part reduces to
\begin{equation*}
\frac{x+z}{|x+z|}\cdot\frac{x-z}{|x+z|}=\frac{2|x|^{2}}{|x+z|^{2}}-1\,,
\end{equation*}
and we conclude that
\begin{align}\label{S3.3}
\begin{split}
&Q^{+}(g,h)(v)=\\
& 2^{n-1}\int_{\mathbb{R}^{n}}\frac{g(x+v)}{\left|x\right|}\int_{\left\{x\cdot z=0\right\}}\frac{\Phi(|x+z|)}{\left|x+z\right|^{n-2}}\;h(z+v)\;b\left(\tfrac{2|x|^{2}}{|x+z|^{2}}-1\right)\dpi_z\,\dx.
\end{split}
\end{align}
Now it is just a matter of comparing the expressions (\ref{S3.3}) and (\ref{S2.6}).
\end{proof}

\begin{proof}[Proof of Theorem \ref{thm2}]
First we consider $f,g,h \in C_0(\R^n)$ and $b \in C([-1,1])$. From Lemma \ref{lem3.1} we can write
\begin{equation*}
I:= \int_{\mathbb{R}^{n}}f(v)Q^{+}(g,h)(v)\,\dv=\int_{\mathbb{R}^{n}}\int_{\mathbb{R}^{n}}f(v)\mathcal{P}(\tau_{-v}g,\tau_{-v}h)\,|k|^{\lambda} \dk\,\dv\, ,
\end{equation*}
and from the definition of the operator $\mc{P}$ in (\ref{S2.3}), with a change of variables $v \to v-k^{+}$ we obtain
\begin{equation*}
I = \int_{\mathbb{R}^{n}}\int_{\mathbb{R}^{n}}\int_{S^{n-1}}g(v)f(v-k^{+})h(v-|k|\omega)b(\hat{k}\cdot\omega)\,\dom\, |k|^{\lambda}\dk\,\dv.
\end{equation*}
We now transform the integration on $k$ into polar coordinates
\begin{equation*}
I = \int_{\mathbb{R}^{n}}\int_{0}^{\infty} \int_{S^{n-1}}\int_{S^{n-1}}g(v)f(v-k^{+})h(v-|k|\omega)b(\hat{k}\cdot\omega)\,\dom\, \textrm{d}\hat{k}\, |k|^{\lambda+n-1}\textrm{d}|k|\,\dv.
\end{equation*}
By defining $x = |k|\omega$ we come back from polar coordinates to 
\begin{equation*}
I = \int_{\mathbb{R}^{n}}\int_{\R^n}\int_{S^{n-1}}g(v)f(v-x^{+})h(v-x)b(\hat{k}\cdot\hat{x})\, \textrm{d}\hat{k}\, |x|^{\lambda}\dx\,\dv\,,
\end{equation*}
and finally, by just relabeling the variables we arrive at the form that will be convenient to us
\begin{align*}
\begin{split}
I = &\int_{\mathbb{R}^{n}}\int_{\R^n}g(v)h(v-x)\left(\int_{S^{n-1}}f(v-x^{+})b(\hat{k}\cdot\hat{x})\, \textrm{d}\hat{k}\right)\, |x|^{\lambda}\dx\,\dv\,\\
= & \int_{\mathbb{R}^{n}}\int_{\R^n}g(v)h(v-k)\, \mc{P}(\mc{R}\tau_{-v}f,1)(k)\, |k|^{\lambda}\dk\,\dv\,.
\end{split}
\end{align*}
Using the inequality
$$|k|^{\lambda} \leq 2^{\lambda}\left(|v|^{\lambda} + |v-k|^{\lambda}\right),$$
for $\lambda\geq0$, we conclude that 
\begin{align}\label{S3.7}
 \begin{split}
I  &\ \leq \ \  2^{\lambda} \int_{\mathbb{R}^{n}}\int_{\R^n}g(v)|v|^{\lambda} h(v-k)\, \mc{P}(\mc{R}\tau_{-v}f,1)(k)\,\dk\,\dv \\
&  \ \ \ \ \ \ \ \ +  2^{\lambda} \int_{\mathbb{R}^{n}}\int_{\R^n}g(v) h(v-k)|v-k|^{\lambda}\, \mc{P}(\mc{R}\tau_{-v}f,1)(k)\,\dk\,\dv\\
&:= A + B\,.
 \end{split}
\end{align}
Our objective now is to bound conveniently the expressions $A$ and $B$ appearing in (\ref{S3.7}). This will be accomplished by means of H\"{o}lder's inequality with exponents $1/p' + 1/q' + 1/r = 1$ and Theorem \ref{thm1}. We simplify the notation by writing $g_{\lambda}(v) = g(v)|v|^{\lambda} $, and start with the analysis of $A$,
\begin{align}\label{S3.7.1}
\begin{split}
A & = 2^{\lambda} \int_{\mathbb{R}^{n}}\int_{\R^n}\left(g_\lambda(v)^{\tfrac{p}{r}} h(v-k)^{\tfrac{q}{r}}\right)\,\left(g_\lambda(v)^{\tfrac{p}{q'}} \mc{P}(\mc{R}\tau_{-v}f,1)(k)^{\tfrac{r'}{q'}}\right) \\
& \ \ \ \ \ \ \ \ \ \ \ \  \ \ \ \ \ \ \ \ \ \ \ \ \ \ \  \ \ \ \ \ \ \ \ \  \left(h(v-k)^{\tfrac{q}{p'}}\mc{P}(\mc{R}\tau_{-v}f,1)(k)^{\tfrac{r'}{p'}}\right)\,\dk\,\dv\\
& \leq 2^{\lambda}\left( \int_{\R^n} \int_{\R^n} g_\lambda(v)^{p} h(v-k)^{q} \dk\,\dv\right)^{\tfrac{1}{r}}\left(\int_{\R^n} \int_{\R^n} g_\lambda(v)^{p} \mc{P}(\mc{R}\tau_{-v}f,1)(k)^{r'} \dk\,\dv\right)^{\tfrac{1}{q'}}\\
&  \ \ \ \ \ \ \ \ \ \ \ \ \ \ \ \ \ \ \ \ \ \ \ \ \ \ \ \ \ \ \ \ \ \ \ \ \ \left(\int_{\R^n}\int_{\R^n} h(v-k)^{q}\mc{P}(\mc{R}\tau_{-v}f,1)(k)^{r'}\dk\,\dv\right)^{\tfrac{1}{p'}}\\
&: = 2^{\lambda} \,A_1 \, \,A_2 \, \,A_3.
\end{split}
\end{align}
We now obtain the bounds for $A_i$, $i=1,2,3$. First observe that
\begin{equation}\label{S3.8}
A_1 = \left(\|h\|_{L^q(\R^n)}^q\|g_{\lambda}\|_{L^{p}(\R^n)}^p \right)^{\tfrac{1}{r}}.
\end{equation}
Using Theorem \ref{thm1} we find
\begin{align}\label{S3.9}
 \begin{split}
A_2 &= \left( \int_{\R^n} g_\lambda(v)^p \left\|\mc{P}(\mc{R}\tau_{-v}f,1)\right\|_{L^{r'}(\R^n)}^{r'} \dv \right)^{\tfrac{1}{q'}}\\
& \leq \left( \int_{\R^n} g_\lambda(v)^p\,  C_2^{r'}\, \|\mc{R}\tau_{-v}f\|_{L^{r'}(\R^n)}^{r'} \dv \right)^{\tfrac{1}{q'}} = \left(C_2^{r'}\, \|g_{\lambda}\|_{L^p(\R^n)}^p \, \|f\|_{L^{r'}(\R^n)}^{r'} \right)^{\tfrac{1}{q'}}\,
 \end{split}
\end{align}
where the constant $C_2$ is given by Theorem \ref{thm1}
\begin{equation}\label{S3.10}
C_2 = 2^{n-2}\left|S^{n-2}\right|\beta_{b}\left(-\tfrac{n}{2r'},0\right) \leq 2^{n-2}\left|S^{n-2}\right|\beta_{b}\left(-\tfrac{n}{2r'},-\tfrac{n}{2r'}\right).
\end{equation}
The remaining term $A_3$, under the change of variables $v \to v+k$, becomes
\begin{align*}
\begin{split}
A_3  = \left(\int_{\R^n}\int_{\R^n} h(v)^{q}\mc{P}(1, \tau_{-v}f)(k)^{r'}\dk\,\dv\right)^{\tfrac{1}{p'}}\,,
\end{split}
\end{align*}
and therefore, using Theorem \ref{thm1} again,
\begin{align}\label{S3.11}
 \begin{split}
 A_3  & = \left(\int_{\R^n}h(v)^{q}\left\|\mc{P}(1, \tau_{-v}f)\right\|_{L^{r'}(\R^n)}^{r'}\dv\right)^{\tfrac{1}{p'}}\\
& \leq \left(\int_{\R^n}h(v)^{q}\,C_3^{r'}\,\|\tau_{-v}f\|_{L^{r'}(\R^n)}^{r'}\dv\right)^{\tfrac{1}{p'}} = \left(C_3^{r'}\, \|h\|_{L^q(\R^n)}^q \, \|f\|_{L^{r'}(\R^n)}^{r'} \right)^{\tfrac{1}{p'}},
 \end{split}
\end{align}
where the constant $C_3$ is given by
\begin{equation}\label{S3.12}
C_3 = 2^{n-2}\left|S^{n-2}\right|\beta_{b}\left(0, -\tfrac{n}{2r'}\right) \leq 2^{n-2}\left|S^{n-2}\right|\beta_{b}\left(-\tfrac{n}{2r'},-\tfrac{n}{2r'}\right).
\end{equation}
Combining expressions (\ref{S3.7.1})-(\ref{S3.12}) we obtain
\begin{align}\label{S3.13}
 \begin{split}
 A & \leq  2^{\lambda} A_1\,A_2\,A_3 \\
& \leq 2^{\lambda + n-2}\left|S^{n-2}\right|\beta_{b}\left(-\tfrac{n}{2r'},-\tfrac{n}{2r'}\right) \|f\|_{L^{r'}(\R^n)}\,\|g_{\lambda}\|_{L^p(\R^n)} \,\|h\|_{L^q(\R^n)}.
 \end{split}
\end{align}
Proceeding analogously for the $B$ term defined in (\ref{S3.7}) we will find
\begin{equation}\label{S3.14}
B \leq 2^{\lambda + n-2}\left|S^{n-2}\right|\beta_{b}\left(-\tfrac{n}{2r'},-\tfrac{n}{2r'}\right) \|f\|_{L^{r'}(\R^n)}\,\|g\|_{L^p(\R^n)} \,\|h_\lambda\|_{L^q(\R^n)}.
\end{equation}
Combining equations (\ref{S3.13}) and (\ref{S3.14}) we arrive at
\begin{align}\label{S3.15}
\begin{split}
I & \leq A + B \\
& \leq 2^{\lambda + n-2}\left|S^{n-2}\right|\beta_{b}\left(-\tfrac{n}{2r'},-\tfrac{n}{2r'}\right) \|f\|_{L^{r'}}\left(\|g_{\lambda}\|_{L^p} \,\|h\|_{L^q} + \|g\|_{L^p} \,\|h_\lambda\|_{L^q}\right)\\
& \leq 2^{\lambda + n-1}\left|S^{n-2}\right|\beta_{b}\left(-\tfrac{n}{2r'},-\tfrac{n}{2r'}\right)\|f\|_{L^{r'}(\R^n)}\|g\|_{{L_{\lambda}^p}(\R^n)}\|h\|_{{L_{\lambda}^q}(\R^n)}.
\end{split}
\end{align}
Inequality (\ref{Intro19}) now follows from (\ref{S3.15}) by duality. By a standard limiting argument (using monotone convergence, for example) we can extend inequality (\ref{Intro19}) for any angular kernel $b:[-1,1]\to \R^{+}$ that satisfies condition (\ref{Intro18}). This finishes the proof.

\end{proof}

\section*{Acknowledgments}
We would like to thank William Beckner for the fruitful discussions on the radial symmetrization techniques, especially on Lemma \ref{Thm2.2}, a core result in this paper. We are also thankful to Irene Gamba for her valuable suggestions and for bringing to our attention some of the references on the subject.

\end{document}